\newcommand{\createtheoremtype}[3]{%
  \newtheorem{#1}[theoremcounter]{\MakeUppercase #2}%
  \newlist{#1parts}{enumerate}{1}%
  \setlist[#1parts]{label=(\alph*),ref=\thetheoremcounter~(\alph*)}%
  \crefname{#1}{\MakeUppercase #2}{\MakeUppercase #3}%
  \Crefname{#1}{\MakeUppercase #2}{\MakeUppercase #3}%
  \crefalias{#1partsi}{#1}%
}
\theoremstyle{remark}
\theoremstyle{theorem}
\newlist{tfae}{enumerate}{1}
\setlist[tfae]{label=(\roman*)}
\crefname{tfae}{condition}{conditions}
\tikzset{
  symbol/.style={
    draw=none,
    every to/.append style={
      edge node={node [sloped, transform shape, auto=false]{$#1$}}}
  }
}
\newcommand{\N}{\mathbb N}
\newcommand{\R}{\mathbb R}
\DeclareMathOperator{\dom}{dom}
\newcommand{\indf}[2][]{%
  \mathrel{\mathop{\vcenter{
        \hbox{\oalign{\noalign{\kern-.3ex}\hfil$\vert$\hfil\cr
              \noalign{\kern-.7ex}
              $\smile$\cr\noalign{\kern-.3ex}}}
      }}\displaylimits_{#2}^{#1}}}
\newcommand{\set}[1]{\left\{#1\right\}}
\newcommand{\setb}[2]{\set{#1\,\middle|\,#2}}
\newcommand{\abs}[1]{\left|#1\right|}
\newcommand{\mscr}{\mathscr}
\newcommand{\xto}{\xrightarrow}
\renewcommand{\phi}{\varphi}
\renewcommand{\theta}{\vartheta}
\newcommand{\permutationhead}[2]{\ifnum #1=#2 #1\else{#1}&\permutationhead{\the\numexpr #1+1}{#2}\fi}
\definecolor{keywordcolor}{rgb}{0.7, 0.1, 0.1}   
\definecolor{commentcolor}{rgb}{0.4, 0.4, 0.4}   
\definecolor{symbolcolor}{rgb}{0.0, 0.1, 0.6}    
\definecolor{sortcolor}{rgb}{0.1, 0.5, 0.1}      
\definecolor{errorcolor}{rgb}{1, 0, 0}           
\definecolor{stringcolor}{rgb}{0.5, 0.3, 0.2}    
\definecolor{tacticcolor}{rgb}{0, 0, 0}
\newcommand{\link}{\xspace\ensuremath{{}^\text{\faExternalLink}}}
\title[A formalization of Borel determinacy in Lean]{A formalization of Borel determinacy in Lean}
\author[Sven Manthe]{Sven Manthe}
\keywords{Lean, formal proof, Descriptive Set Theory, Borel set, determinacy}
\begin{abstract}
  We present a formalization of Borel determinacy in the Lean 4 theorem prover. The formalization includes a definition of Gale-Stewart games and a proof of Martin's theorem stating that Borel games are determined. The proof closely follows Martin's ``A purely inductive proof of Borel determinacy''.
\end{abstract}
\begin{document}

\section{Introduction}
Determinacy of infinite two-player games, introduced by Gale and Stewart \cite{galestewart}, is a central topic in descriptive set theory. While determinacy for large classes of sets is intimately connected with large cardinals, Borel determinacy is provable in Zermelo-Fraenkel set theory with the axiom of choice, ZFC \cite{boreldet}.\par
We formalize the proof in the Lean 4 theorem prover \cite{leansystem}, also using its mathematical library, mathlib \cite{mathlib}. Lean is an interactive theorem prover based on the Calculus of Inductive Constructions. Its theory is slightly stronger than ZFC---precisely, it is equiconsistent with $\text{ZFC}\cup\setb{\text{there are }n\text{ inaccessible cardinals}}{n\in\N}$ \cite{typetheorylean} and proves the same statements of higher order arithmetic by the construction provided therein. Thus, Borel determinacy, which is famous for requiring a much larger fragment of ZFC than most common theorems \cite{boreldetrepl}, still holds in Lean's type theory by the same proof as in ZFC. In contrast, stronger forms of determinacy, like analytic determinacy, are not provable in Lean without additional axioms (unless Lean is inconsistent).\par
The formalization follows the presentation in \cite{indboreldet}, with some adaptations. The proof proceeds by induction on the construction of Borel sets, establishing not determinacy directly, but the stronger property of unravelability (about which not much is known beyond the Borel sets \cite{unravunknown}). A key step is the unravelability of closed games, where we follow the proof closely. The handling of countable unions of games, however, required changes. The proof involves an inverse limit of games, which stabilizes levelwise. All stabilized levels are identified, which is problematic to formalize directly. To overcome this, we use a category theoretic lemma. Furthermore, we replace the use of transfinite induction to show unravelability with a direct induction on the construction of Borel sets. This requires a strengthening of unravelability, which we term ``universal unravelability''.\footnote{The account in \cite{moschovakisdst} provides another approach using induction on Borel sets, defining sets that ``unravel fully''.}\par
Our formalization deviates from the mathlib convention of using ``junk values'' instead of propositional hypotheses when implementing partial functions. We compare the advantages of the two approaches. We also briefly discuss the current limitations of Lean's support for propositional hypotheses and our workarounds.\par
For no class beyond the closed sets has determinacy been formalized in a proof assistant before. The project\footnote{\url{https://github.com/sven-manthe/A-formalization-of-Borel-determinacy-in-Lean}} consists of about 5,000 lines of code. The foundational part, including the definitions of Gale-Stewart games and determinacy, comprises less than half of the code. The majority of the code formalizes Martin's proof itself.\par
The article is organized as follows. We first provide the statement of Borel determinacy and sketch Martin's proof in \cref{sec:outline}, providing the necessary context for the subsequent formalization details. In \cref{sec:details} we turn to a technical description of issues specific to the formalization of Borel determinacy. Finally, we discuss the more general design choice of implementing partial functions via propositional hypotheses instead of ``junk values'' in \cref{sec:partial}. We provide static links to formalized lemmas with the \href{https://archive.softwareheritage.org/swh:1:dir:7228df9d31b4c2da3f3de47daff98f18cd966f1d;origin=https://github.com/sven-manthe/A-formalization-of-Borel-determinacy-in-Lean;visit=swh:1:snp:6860cd60adfdddb6a8bec689ddb852388d32bd5d;anchor=swh:1:rev:267694b2241ce1900123d6d9fa15bb4a0b42a603}{\link} symbol.
\section{Outline of the informal proof}\label{sec:outline}
In this section, we define Gale-Stewart games and sketch a minor variation of Martin's proof of Borel determinacy. For a more comprehensive treatment, see \cite[Section 20]{kechrisdst}. Several other proofs of Borel determinacy can be found in the literature \cites{boreldet}{indboreldet}[Theorem 20.5]{kechrisdst}[Theorem 6F.1]{moschovakisdst}[403-470]{analyticconf}{studentboreldet}.
\subsection{Notation and definitions}
Our notation is mostly set theoretic, not type theoretic. We regard $0\in\N$. The prefix relation on finite sequences is denoted by $\preceq$. The length of a finite sequence $x$ is denoted by $\abs x$. For a function $f$ whose domain consists of finite sequences, we write $f|_{=n},f|_{\le n}$ for the restrictions of $f$ to $\setb{x\in\dom f}{\abs x=n}$, respectively $\setb{x\in\dom f}{\abs x\le n}$.\par
A \textbf{tree} on a set $A$ is a set $T$ of finite sequences in $A$ such that $x\preceq y\in T$ implies $x\in T$. A tree is \textbf{pruned} if it has no leaves, that is, $\preceq$-maximal elements. Its \textbf{body} $[T]$ is the set of infinite branches, that is, elements of $A^\N$ all whose finite prefixes lie in $T$.\par
A \textbf{Gale-Stewart game} is a pair $G=(T,P)$, where $T$ is a nonempty pruned tree and $P\subseteq[T]$ is the payoff set. The game is played between two players, $0$ and $1$, choosing alternately elements from $A$ such that the resulting finite sequence lies in $T$, starting with player $0$. Their infinite play results in a branch $a\in[T]$. Player $0$ wins if $a\in P$; otherwise, player $1$ wins. A \textbf{strategy} $\sigma$ of player $i\in\set{0,1}$ is a map $\sigma$ associating to each position $x$ of player $i$ (that is, $x\in T$ with $\abs x\equiv i\mod2$) a successor position. A play is \textbf{consistent} with $\sigma$ if throughout the play, each move of player $i$ is given by $\sigma$. A strategy $\sigma$ yields a tree of partial plays compatible with $\sigma$, and we identify $\sigma$ with this tree. The strategy $\sigma$ is \textbf{winning} if all plays consistent with $\sigma$ are won by player $i$, that is, $[\sigma]\subseteq P$ if $i=0$ and $[\sigma]\subseteq[T]\setminus P$ if $i=1$.\par
It is straightforward to show that at most one player has a winning strategy. A game is \textbf{determined} if at least one player has a winning strategy. There are undetermined games even for $A=\set{0,1}$, as can be seen by counting strategies and possible payoff sets or by employing a nonprincipal ultrafilter. (Without the axiom of choice, there are still undetermined games on the power set $2^\R$ and $\aleph_1$, but determinacy for $\N$ or $\R$ is related to large cardinals.)\par
The Borel $\sigma$-algebra of a topological space is the $\sigma$-algebra generated by the open sets. For a tree $T$ on $A$, we equip the body $[T]\subseteq A^\N$ with the subspace topology of the product topology of the discrete topologies on $A$. A game is Borel (respectively open, closed, clopen...) if its payoff set is so as a subset of $[T]$.
\subsection{Proof sketch}
We can now state Borel determinacy and sketch a proof.
\begin{theorem}\cite{boreldet}\href{https://archive.softwareheritage.org/swh:1:cnt:7b42f5e3634650a4f2c474ce13dd0be9be2de27b;origin=https://github.com/sven-manthe/A-formalization-of-Borel-determinacy-in-Lean;visit=swh:1:snp:6860cd60adfdddb6a8bec689ddb852388d32bd5d;anchor=swh:1:rev:267694b2241ce1900123d6d9fa15bb4a0b42a603;path=/BorelDet/Proof/borel_determinacy.lean;lines=158}{\link}
  Every Borel game is determined.
\end{theorem}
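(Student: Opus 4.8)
The plan is to follow Martin's purely inductive proof. Rather than proving determinacy directly, I would prove the stronger property of \emph{unravelability}: a game $G=(T,P)$ is unravelable if there is a \emph{covering} of it by a clopen game $\tilde G=(\tilde T,\tilde P)$, meaning a monotone map $\pi\colon\tilde T\to T$ of trees together with operations that lift strategies of either player in $\tilde G$ to strategies in $G$ and, conversely, lift plays of $G$ consistent with such a lifted strategy back to branches of $\tilde T$, all compatibly with $\pi$ and so that winning is preserved. The first, easy, step is the reduction: a clopen game is determined by the elementary Gale--Stewart argument (the winner of every play is settled at a finite stage, so a K\"onig's-lemma/pruning argument on the tree of undecided positions yields a winning strategy for one of the players), and a covering transports a winning strategy for $\tilde G$ to one for $G$. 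Hence every unravelable game is determined, and it suffices to show that every Borel game is unravelable.

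For the latter I would induct on the Borel hierarchy, using that the Borel sets are generated from the closed sets under complementation and countable unions. The base case, \textbf{unravelability of closed games}, is the heart of the proof: given a closed payoff $P=[S]$, one builds an auxiliary tree in which at a suitable position a player must declare auxiliary data --- essentially a quasi-strategy, i.e.\ a subtree of the remaining game witnessing how the play will be completed --- after which the outcome is decided by finitely many further moves, so $\tilde G$ is clopen; one then defines and checks the strategy-lifting operations by hand. The step for \textbf{complementation} is immediate, as a covering of $(T,P)$ is also a covering of $(T,[T]\setminus P)$. For \textbf{countable unions}, given games $(T,P_n)$ with coverings unraveling the $P_n$, one composes successive coverings into an inverse system $G\leftarrow G_0\leftarrow G_1\leftarrow\cdots$ in which the $n$-th term already unravels $P_0\cup\cdots\cup P_n$, and then passes to the inverse limit; the point is that at each fixed level of the trees only finitely many of the coverings act nontrivially, so the inverse limit stabilizes levelwise and is again a covering of $G$, now unraveling $\bigcup_n P_n$. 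As every Borel payoff arises this way, every Borel game is unravelable, hence determined.

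The main obstacle is the closed case: arranging the ``declare a quasi-strategy'' covering so that strategy-lifting in both directions, together with its compatibility with the tree map $\pi$, all go through is delicate, and this is where essentially all the content of Martin's theorem sits. A secondary difficulty --- negligible informally but real in a formalization --- is the levelwise-stabilizing inverse limit for countable unions: one must coherently identify the stabilized levels of infinitely many successive coverings, which is cleanest to organize through a short category-theoretic lemma on inverse limits of diagrams that are eventually constant in each degree. Finally, to carry the induction through structurally rather than by a separate transfinite recursion, I would strengthen unravelability to ``universal unravelability'' --- unravelable by a single covering that works simultaneously above every node of $T$ --- so that the three closure steps can be iterated uniformly along the generation of the Borel sets.
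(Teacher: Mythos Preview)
Your outline tracks Martin's architecture correctly, but two ingredients are misstated in ways that would block the induction.

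First, unravelability must carry a level parameter: $(T,P)$ is unravelable if \emph{for every $k$} there is a $k$-covering --- one whose tree map is bijective on the first $k$ levels and whose strategy map agrees with it there --- making $[\pi]^{-1}(P)$ clopen. This $k$ is precisely what justifies your levelwise-stabilization claim: in building the inverse system one chooses the $n$-th transition map to be $(k+n)$-fixing, and \emph{that} is why only finitely many coverings act nontrivially at any fixed level. Without the parameter in the definition there is nothing forcing stabilization, and the limit need not be a covering at all.

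Second, your proposed ``universal unravelability'' --- a single covering that works above every node --- is not the strengthening that makes a structural induction go through. The notion actually used is: $(T,P)$ is universally unravelable if for every covering $\pi\colon T'\to T$ the pulled-back game $(T',[\pi]^{-1}P)$ is again unravelable. This is exactly what the countable-union step consumes: at stage $n+1$ one must unravel not $P_{n+1}$ itself but its preimage in the already-constructed $T_n$, and only universal unravelability of $P_{n+1}$ guarantees such a covering exists. (The paper notes it is open whether merely unravelable sets form a $\sigma$-algebra, so your weaker hypothesis may genuinely not suffice.) Finally, after passing to the inverse limit the preimage of $\bigcup_n P_n$ is only open --- a countable union of clopens --- not clopen; one further application of the closed-game base case is needed to complete the unraveling, a step your sketch omits.
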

The proof for closed games is essentially the proof of Zermelo's theorem, that is, determinacy of finite games. Namely, if player $1$ has no winning strategy, then player $0$ has a strategy that never moves into a winning position for player $1$. If the game is closed, then any such strategy of player $0$ is winning. In particular, closed (and open) games are determined, but the more precise version in the previous sentence will be used throughout the proof.\par
Since finite unions of determined sets are not necessarily determined, the Borel induction needs to pass through a strengthened concept, universal unravelability, which indeed yields a $\sigma$-algebra.\par
Let $\mscr C$ be the category whose objects are pairs $(A,T)$ where $T$ is a tree on $A$ and whose morphisms $(A,S)\to(B,T)$ are isotone, length preserving maps $S\to T$. The body map $T\mapsto[T]$ extends to a functor from $\mscr C$ to topological spaces\href{https://archive.softwareheritage.org/swh:1:cnt:e98cd599715e6fcc5698f02e594ed00e8cf405fb;origin=https://github.com/sven-manthe/A-formalization-of-Borel-determinacy-in-Lean;visit=swh:1:snp:6860cd60adfdddb6a8bec689ddb852388d32bd5d;anchor=swh:1:rev:267694b2241ce1900123d6d9fa15bb4a0b42a603;path=/BorelDet/Tree/body_functor.lean;lines=90}{\link}, and we denote its action on morphisms also by $f\mapsto[f]$. A \textbf{covering} $T'\to T$ is a pair of a morphism $\pi\colon T'\to T$ in $\mscr C$ and a map $\phi$ from strategies in $T'$ to strategies in $T$ of the same player such that $\sigma|_{\le n}=\sigma'|_{\le n}$ implies $\phi(\sigma)|_{\le n}=\phi(\sigma')|_{\le n}$ and the following lifting condition holds: For $x\in[\phi(\sigma)]$ there is $x'\in[\sigma]$ with $[\pi](x')=x$. A covering $(\pi,\phi)$ is $k$-\textbf{fixing}, or a $k$-covering, if the restriction of $\pi$ to the first $k$ levels is a bijection and $\phi$ is the map induced by $\pi$ on these levels.\par
A game $(T,P)$ is \textbf{unravelable} if for all $k\in\N$ there is a $k$-covering $(\pi,\phi)\colon T'\to T$ such that $[\pi]^{-1}(P)$ is clopen\href{https://archive.softwareheritage.org/swh:1:cnt:e086164446706288737a98f3441a0230e33e93d2;origin=https://github.com/sven-manthe/A-formalization-of-Borel-determinacy-in-Lean;visit=swh:1:snp:6860cd60adfdddb6a8bec689ddb852388d32bd5d;anchor=swh:1:rev:267694b2241ce1900123d6d9fa15bb4a0b42a603;path=/BorelDet/Proof/covering.lean;lines=164}{\link}. $(T,P)$ is \textbf{universally unravelable} if for all coverings $(\pi,\phi)\colon T'\to T$, the game $(T',[\pi]^{-1}(P))$ is unravelable\href{https://archive.softwareheritage.org/swh:1:cnt:7b42f5e3634650a4f2c474ce13dd0be9be2de27b;origin=https://github.com/sven-manthe/A-formalization-of-Borel-determinacy-in-Lean;visit=swh:1:snp:6860cd60adfdddb6a8bec689ddb852388d32bd5d;anchor=swh:1:rev:267694b2241ce1900123d6d9fa15bb4a0b42a603;path=/BorelDet/Proof/borel_determinacy.lean;lines=46}{\link}. It is easy to show that a covering $(\pi,\phi)\colon T'\to T$ maps winning strategies in $(T',[\pi]^{-1}(P))$ to winning strategies in $(T,P)$. Thus the determinacy of clopen games implies the determinacy of unravelable games \cite[Lemma 1]{indboreldet}\href{https://archive.softwareheritage.org/swh:1:cnt:e086164446706288737a98f3441a0230e33e93d2;origin=https://github.com/sven-manthe/A-formalization-of-Borel-determinacy-in-Lean;visit=swh:1:snp:6860cd60adfdddb6a8bec689ddb852388d32bd5d;anchor=swh:1:rev:267694b2241ce1900123d6d9fa15bb4a0b42a603;path=/BorelDet/Proof/covering.lean;lines=166}{\link}. Then Borel determinacy follows from the following two lemmas:
\begin{lemma}\label{unravsig}\href{https://archive.softwareheritage.org/swh:1:cnt:7b42f5e3634650a4f2c474ce13dd0be9be2de27b;origin=https://github.com/sven-manthe/A-formalization-of-Borel-determinacy-in-Lean;visit=swh:1:snp:6860cd60adfdddb6a8bec689ddb852388d32bd5d;anchor=swh:1:rev:267694b2241ce1900123d6d9fa15bb4a0b42a603;path=/BorelDet/Proof/borel_determinacy.lean;lines=120}{\link}
  For a fixed tree $T$, the set of $P$ such that $(T,P)$ is universally unravelable forms a $\sigma$-algebra.
\end{lemma}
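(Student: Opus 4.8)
The plan is to check directly that the collection of universally unravelable $P\subseteq[T]$ satisfies the three defining closure properties of a $\sigma$-algebra: it contains $\emptyset$, it is closed under complements, and it is closed under countable unions; essentially all the work is in the last one. Throughout I use a few routine facts implicit in the setup: the identity morphism of a tree is a covering and is in fact a $k$-covering for every $k$; coverings compose, and a composite of $k$-coverings is a $k$-covering; and, since the body functor takes values in topological spaces and sends coverings to continuous maps, the preimage of a clopen set under the body map of a covering is clopen, with such preimages commuting with complements and countable unions.

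For $\emptyset$: given any covering $(\pi,\phi)\colon T'\to T$, the set $[\pi]^{-1}(\emptyset)=\emptyset$ is clopen in $[T']$ and is unraveled by $\mathrm{id}_{T'}$, a $k$-covering for all $k$; hence $(T',\emptyset)$ is unravelable for every covering, i.e.\ $(T,\emptyset)$ is universally unravelable (and likewise $[T]$). For complements: if $(T,P)$ is universally unravelable and $(\pi,\phi)\colon T'\to T$ is any covering, then $(T',[\pi]^{-1}(P))$ is unravelable; given $k$, pick a $k$-covering $(\rho,\psi)\colon T''\to T'$ with $[\rho]^{-1}[\pi]^{-1}(P)$ clopen, and note that $[\rho]^{-1}[\pi]^{-1}([T]\setminus P)=[T'']\setminus[\rho]^{-1}[\pi]^{-1}(P)$ is again clopen; so $(T',[\pi]^{-1}([T]\setminus P))$ is unravelable and $(T,[T]\setminus P)$ is universally unravelable. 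The same complementation shows open games are unravelable: the closed games are unravelable (the separately established key step), and a covering unraveling a closed set also unravels its open complement; this is used in the countable-union case.

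For countable unions, suppose each $(T,P_n)$ is universally unravelable, let $(\pi_0,\phi_0)\colon T_0\to T$ be an arbitrary covering, and fix $k\in\N$; it suffices to produce a $k$-covering of $T_0$ along which $[\pi_0]^{-1}(\bigcup_n P_n)$ becomes clopen. Following Martin, I would recursively build a tower of coverings $\cdots\to T_2\to T_1\to T_0$, carrying along the composite coverings $T_m\to T_0$ and $T_m\to T$, so that $T_{m+1}\to T_m$ is a $\max(k,m)$-covering unraveling the pullback of $P_m$ to $[T_m]$. This is possible precisely because $P_m$ is \emph{universally} unravelable: applied to the composite covering $T_m\to T$, it shows that the pullback of $P_m$ to $[T_m]$ is the payoff of an unravelable game, which therefore admits an $\ell$-covering unraveling it for every $\ell$; mere unravelability of $P_m$ would not suffice, since $T_m$ genuinely depends on $(\pi_0,\phi_0)$ and on $P_0,\dots,P_{m-1}$ --- this is exactly why the ``universal'' strengthening is needed. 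Since every $T_{m+1}\to T_m$ is at least $k$-fixing and the fixing levels tend to infinity, the inverse limit $T_\infty=\varprojlim_m T_m$ is a well-defined tree obtained by stabilizing each finite level, equipped with a $k$-covering $T_\infty\to T_0$ and a covering $T_\infty\to T$ factoring through $\pi_0$. Since the body map of a covering is continuous, the pullback of each $P_n$ to $[T_\infty]$ is clopen, hence the pullback of $\bigcup_n P_n$ is a countable union of clopen sets and so open; equivalently, $[\pi_0]^{-1}(\bigcup_n P_n)$ pulls back along $T_\infty\to T_0$ to an open subset of $[T_\infty]$. Because open games are unravelable, for our fixed $k$ there is a $k$-covering $T_*\to T_\infty$ unraveling this open game; composing $T_*\to T_\infty\to T_0$ yields a $k$-covering of $T_0$ along which $[\pi_0]^{-1}(\bigcup_n P_n)$ is clopen. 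As $k$ and $(\pi_0,\phi_0)$ were arbitrary, $(T,\bigcup_n P_n)$ is universally unravelable.

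The \textbf{main obstacle} is the inverse-limit step in the countable-union case. One must show that the levelwise stabilization genuinely assembles the $T_m$ into a single tree $T_\infty$, that the projections $T_\infty\to T_m$ --- and the composites $T_\infty\to T_0$, $T_\infty\to T$ --- are honest coverings, and in particular that the lifting condition survives passage to the limit, which amounts to threading a single branch of $[\sigma]$ through the compatible system of liftings at all finite stages. This is exactly where, in the formalization, the informal move of ``identifying all stabilized levels'' is awkward to carry out directly in type theory and is replaced by an appeal to a category-theoretic lemma on inverse systems that stabilize levelwise; once that is in place, the bookkeeping of fixing levels (keeping them $\ge k$ and tending to infinity) and of which covering unravels which $P_m$ is routine.
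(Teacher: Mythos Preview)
Your proposal is correct and follows essentially the same approach as the paper: trivial complement stability, then the recursive tower of increasingly fixing coverings unraveling the $P_n$ one at a time, passage to the inverse limit, and a final unraveling of the resulting open game. The only cosmetic differences are that the paper uses $(k+i)$-coverings rather than your $\max(k,m)$-coverings for the transition maps, and it invokes universal unravelability of the open limit game (via \cref{unravclo}) where you only use unravelability; neither changes the argument.
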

\begin{lemma}\cite[Lemma 3]{indboreldet}\label{unravclo}\href{https://archive.softwareheritage.org/swh:1:cnt:7b42f5e3634650a4f2c474ce13dd0be9be2de27b;origin=https://github.com/sven-manthe/A-formalization-of-Borel-determinacy-in-Lean;visit=swh:1:snp:6860cd60adfdddb6a8bec689ddb852388d32bd5d;anchor=swh:1:rev:267694b2241ce1900123d6d9fa15bb4a0b42a603;path=/BorelDet/Proof/borel_determinacy.lean;lines=55}{\link}
  Closed games are universally unravelable.
\end{lemma}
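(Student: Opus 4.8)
The plan is to reduce universal unravelability of a closed game to its plain unravelability, and then to unravel closed games by following Martin's auxiliary-game construction.

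For the reduction, let $(T,P)$ be a closed game and $(\pi,\phi)\colon T'\to T$ an arbitrary covering. Since the body map is a functor to topological spaces, $[\pi]\colon[T']\to[T]$ is continuous, so $[\pi]^{-1}(P)$ is closed in $[T']$ and $(T',[\pi]^{-1}(P))$ is again a closed game. Hence the class of closed games is stable under the pullbacks appearing in the definition of universal unravelability, and it suffices to prove the plain statement: every closed game $(T,P)$ is unravelable, i.e.\ for each $k\in\N$ there is a $k$-covering $(\pi,\phi)\colon T'\to T$ with $[\pi]^{-1}(P)$ clopen.

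For the unravelability of a closed game $(T,P)$ at a level $k$, write $P=[R]$, where $R\subseteq T$ is the subtree of finite prefixes of branches of $P$; the cases $P=\emptyset$ and $P=[T]$ are already clopen, so assume $R$ nonempty and pruned. The subgame played above any position $q$ of length $k$, with the closed payoff inherited from $P$, is determined --- this is the Zermelo argument quoted above for closed games --- and in fact the winning player there has a \emph{canonical} winning quasi-strategy, read off from the ``never move into a winning position for the opponent'' description: one that keeps the play inside $R$ if player $0$ wins above $q$, and one that drives the play out of $R$ if player $1$ wins above $q$. Following Martin, I would build the covering tree $T'$ by leaving the levels $\le k$ unchanged and, above each $q$, grafting on that quasi-strategy together with the bookkeeping that records, in the passage through level $k$, the declaration of the winning player's canonical quasi-strategy and constrains all later play to remain compatible with it; the strategy map $\phi$ transports a strategy of $T'$ to the strategy of $T$ that plays along the declared quasi-strategy, reverting to a canonical winning quasi-strategy of the appropriate player whenever the opponent drives the play off it. The construction is arranged so that the declaration, visible from level $k+1$ on, already determines whether the eventual branch lies in $P$, which makes $[\pi]^{-1}(P)$ clopen.

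The auxiliary verifications --- $T'$ a nonempty pruned tree, $\pi$ isotone and length preserving, the first $k$ levels fixed, and the clopenness computation --- are routine. The step I expect to be the main obstacle is verifying that $(\pi,\phi)$ is genuinely a covering: the coherence of $\phi$ with restrictions, and above all the lifting condition, that every play consistent with $\phi(\sigma)$ lifts along $[\pi]$ to a play consistent with $\sigma$. This is where the quasi-strategy bookkeeping has to be arranged exactly as in Martin's proof, since the declaring player is confined to her declaration inside $T'$ but not inside $T$, so the lift must correctly follow the play into the region where the winning player changes. Finally, a modest amount of extra work is needed to deal with the parity of $k$ --- which of the two players moves at level $k$ controls who issues the declaration --- either by a case distinction or by phrasing the construction symmetrically in the two players.
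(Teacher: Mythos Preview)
Your reduction step is correct and matches the paper: since $[\pi]$ is continuous, preimages of closed payoffs are closed, so universal unravelability of closed games reduces to plain unravelability.

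Your unraveling construction, however, is not Martin's, and it does not satisfy the lifting condition. In Martin's game (as sketched in the paper), player~$0$ at level $k$ freely plays \emph{any} quasistrategy $\sigma$ for herself; player~$1$ then responds either with a finite sequence $x$ compatible with $\sigma$ after which every extension lies outside $P$, or with a quasistrategy for herself that, together with $\sigma$, forces the play into $P$. Clopenness comes from which option player~$1$ chose, not from a pre-computed winner at $q$. Your version instead fixes, for each $q$ of length $k$, the winning player $j$ and grafts $j$'s canonical winning quasistrategy $W_q$ onto $T'$ above $q$.

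This makes $T'$ too small. Take a strategy $\sigma'$ for the losing player $i\ne j$ in $T'$, and let $x\in[\phi(\sigma')]$ be a play in $T$ in which player~$j$ deviates from $W_q$ at some stage (nothing prevents this, since $\phi(\sigma')$ only constrains player~$i$'s moves). Then $x$ leaves $W_q$ and has no preimage in $T'$ at all, so no $x'\in[\sigma']$ with $[\pi](x')=x$ exists. Your remedy---having $\phi(\sigma')$ ``revert to a canonical winning quasistrategy of the appropriate player''---may well make $\phi(\sigma')$ winning once $j$ has left $W_q$ (since $j$ then sits in a position where $i$ wins), but that is irrelevant to the lifting condition, which demands an actual lift of $x$, not merely that $x$ be won by $i$. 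The two-option response in Martin's construction is exactly what makes the lift work: the auxiliary data recorded in $T'$ tracks a commitment by the \emph{moving} player, not a verdict computed from the position, and the case split in player~$1$'s move is what lets the strategy map $\phi$ handle opponent deviations while still producing lifts.
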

\begin{proof}[Proof sketch of \cref{unravsig}]
  Stability under complements is trivial.\href{https://archive.softwareheritage.org/swh:1:cnt:7b42f5e3634650a4f2c474ce13dd0be9be2de27b;origin=https://github.com/sven-manthe/A-formalization-of-Borel-determinacy-in-Lean;visit=swh:1:snp:6860cd60adfdddb6a8bec689ddb852388d32bd5d;anchor=swh:1:rev:267694b2241ce1900123d6d9fa15bb4a0b42a603;path=/BorelDet/Proof/borel_determinacy.lean;lines=48}{\link} For countable unions, we employ an inverse limit construction, where we use the parameter $k$ in $k$-coverings.\par
  Specifically, given a diagram
  \[\dots\to T_3\xto{(\pi_2,\phi_2)}\to T_2\xto{(\pi_1,\phi_1)}T_1\xto{(\pi_0,\phi_0)}T_0\]
  such that each $(\pi_i,\phi_i)$ is a $(k+i)$-covering, the projections of its limit $T=\lim_iT_i$ in $\mscr C$ can be extended to $(k+i)$-coverings. The main idea is to define the strategies levelwise by first mapping to a sufficiently large $T_i$ and then applying the transition maps. The lifts required in the definition of a covering can be constructed levelwise similarly. Since this is precisely \cite[Lemma 4]{indboreldet}, we omit the details.\par
  To deduce \cref{unravsig}, we use a variant of the argument of \cite[Theorem]{indboreldet}. Namely, given a sequence $P_i$ such that the $(T,P_i)$ are universally unravelable, recursively define an inverse system of trees such that each transition map unravels an additional $[\pi_i\circ\dots\circ\pi_0]^{-1}(P_i)$. The limit of this system is a game whose payoff set is the preimage of $\bigcup_iP_i$ under the first projection. The limit game is open, thus universally unravelable\href{https://archive.softwareheritage.org/swh:1:cnt:7b42f5e3634650a4f2c474ce13dd0be9be2de27b;origin=https://github.com/sven-manthe/A-formalization-of-Borel-determinacy-in-Lean;visit=swh:1:snp:6860cd60adfdddb6a8bec689ddb852388d32bd5d;anchor=swh:1:rev:267694b2241ce1900123d6d9fa15bb4a0b42a603;path=/BorelDet/Proof/borel_determinacy.lean;lines=58}{\link} by \cref{unravclo}. The composite of the covering unraveling the limit with the projection witnesses unravelability of $(T,\bigcup_iP_i)$.
\end{proof}

\begin{proof}[Proof sketch of \cref{unravclo}]
  We first observe that it suffices to show that closed games are unravelable since $[\pi]^{-1}(P)$ is closed for a covering $(\pi,\phi)\colon T'\to T$ of a closed game $(T,P)$. This is also the statement of \cite[Lemma 3]{indboreldet}\href{https://archive.softwareheritage.org/swh:1:cnt:7b42f5e3634650a4f2c474ce13dd0be9be2de27b;origin=https://github.com/sven-manthe/A-formalization-of-Borel-determinacy-in-Lean;visit=swh:1:snp:6860cd60adfdddb6a8bec689ddb852388d32bd5d;anchor=swh:1:rev:267694b2241ce1900123d6d9fa15bb4a0b42a603;path=/BorelDet/Proof/borel_determinacy.lean;lines=26}{\link}.\par
  To simplify notation, we only construct a $0$-covering $(A',T')\to(A,T)$. In the first move, player $0$ plays not only a move $a_0\in A$, but also a quasistrategy (a strategy possibly allowing several moves in a given position) $\sigma$ of herself. In the second move, player $1$ responds either with a finite sequence $x$ compatible with $\sigma$ such that she wins every play extending $x$ or with a quasistrategy of herself that guarantees losing against $\sigma$. In either case, the players follow the played strategies (until the finite sequence is finished in the first case). There is a morphism in $\mscr C$ from this game to the original game given by forgetting the played strategies. Moreover, the winner is determined after the first moves of both players, whence the game is clopen. We omit the description of the strategy part of the covering, which actually is the most involved part of the proof of Borel determinacy.
\end{proof}
Let us also note where the proof theoretic strength of Borel determinacy appears in this proof. Namely, starting with a game $((A,T),P)$, the construction of \cref{unravclo} yields a game on a set of cardinality $2^{\#A}$. Thus, the construction of \cref{unravsig} leads to an underlying set of cardinality $\beth_\omega(\#A)$. Finally, this construction needs to be iterated inside of a Borel induction, leading to sets of cardinality $\beth_\alpha(\#A)$ for arbitrarily large countable ordinals $\alpha$.
\subsection{Comparison to Martin's proof}
We now discuss the main differences between our presentation (and formalization) and \cite{indboreldet}. While Martin does not explicitly use the category $\mscr C$, the limit construction is the same. However, he does not only require $k$-coverings to be bijective on the first $k$ levels, but to equal the identity there; mathematically, this does not matter.\par
The definition of universally unravelable is our own: Martin only uses unravelability. Thus, he does not show that the unravelable sets form a $\sigma$-algebra, but instead proves by transfinite induction on the levels of the Borel hierarchy that Borel sets are unravelable. Our modification enables the recursive construction of the inverse system in the argument for countable unions, thus avoiding the explicit use of ordinals in Martin's induction. Proving universal unravelability instead of unravelability is easy in all required cases. It is not clear to the author whether the unravelable sets also yield a $\sigma$-algebra or even whether every unravelable set is universally unravelable. The Borel sets satisfy a further strengthening of universal unravelability, where preimages under coverings are replaced by preimages under arbitrary continuous maps from trees, defined as ``unraveling fully'' in \cite[before 6F.3]{moschovakisdst} for the same purpose.
\section{Formalizing the proof}\label{sec:details}
We now describe some technical aspects of the formalization of the variant of Martin's proof outlined in \cref{sec:outline}.
\subsection{Gale-Stewart games}
We begin by discussing design choices in the formalization of Gale-Stewart games. The formalization of closed determinacy\href{https://archive.softwareheritage.org/swh:1:cnt:96b3bc9798948c2ec543dbd8190bdbfb4511b615;origin=https://github.com/sven-manthe/A-formalization-of-Borel-determinacy-in-Lean;visit=swh:1:snp:6860cd60adfdddb6a8bec689ddb852388d32bd5d;anchor=swh:1:rev:267694b2241ce1900123d6d9fa15bb4a0b42a603;path=/BorelDet/Game/gale_stewart.lean;lines=78}{\link} is relatively straightforward and thus we will not discuss it.\par
We represented finite sequences as lists, which have a developed API in mathlib. The existence of Cantor subsets of sufficiently definable, e.g., determined, sets, is usually proved by combining finite approximations into a Cantor scheme \cite[Section 6.A]{kechrisdst}. In contrast to the construction of Cantor schemes in mathlib\href{https://github.com/leanprover-community/mathlib4/blob/06bd3096751d751e3267390c3eed35f3a49d8a8c/Mathlib/Topology/MetricSpace/CantorScheme.lean#L51}{\link}, which also relies on lists to approximate branches of the tree $2^\N$, we did not reverse lists when compared to standard mathematical notation. For example, the first move of a game corresponds to the first element of the list, not the last one.\par
The definitions of trees\href{https://github.com/leanprover-community/mathlib4/blob/06bd3096751d751e3267390c3eed35f3a49d8a8c/Mathlib/SetTheory/Descriptive/Tree.lean#L25}{\link} and games\href{https://archive.softwareheritage.org/swh:1:cnt:00952ab227ef5ce78a836a035669677c8bab797a;origin=https://github.com/sven-manthe/A-formalization-of-Borel-determinacy-in-Lean;visit=swh:1:snp:6860cd60adfdddb6a8bec689ddb852388d32bd5d;anchor=swh:1:rev:267694b2241ce1900123d6d9fa15bb4a0b42a603;path=/BorelDet/Game/games.lean;lines=12}{\link} based on this list representation were straightforward. However, the formalization of strategies required some choices. First, in addition to the notions of strategy\href{https://archive.softwareheritage.org/swh:1:cnt:d38cdffd5509761b07b53a564a62b93a815caa53;origin=https://github.com/sven-manthe/A-formalization-of-Borel-determinacy-in-Lean;visit=swh:1:snp:6860cd60adfdddb6a8bec689ddb852388d32bd5d;anchor=swh:1:rev:267694b2241ce1900123d6d9fa15bb4a0b42a603;path=/BorelDet/Game/strategies.lean;lines=117}{\link} and quasistrategy\href{https://archive.softwareheritage.org/swh:1:cnt:d38cdffd5509761b07b53a564a62b93a815caa53;origin=https://github.com/sven-manthe/A-formalization-of-Borel-determinacy-in-Lean;visit=swh:1:snp:6860cd60adfdddb6a8bec689ddb852388d32bd5d;anchor=swh:1:rev:267694b2241ce1900123d6d9fa15bb4a0b42a603;path=/BorelDet/Game/strategies.lean;lines=109}{\link}, we proved most lemmas in terms of the weaker notion of \textbf{prestrategy}\href{https://archive.softwareheritage.org/swh:1:cnt:d38cdffd5509761b07b53a564a62b93a815caa53;origin=https://github.com/sven-manthe/A-formalization-of-Borel-determinacy-in-Lean;visit=swh:1:snp:6860cd60adfdddb6a8bec689ddb852388d32bd5d;anchor=swh:1:rev:267694b2241ce1900123d6d9fa15bb4a0b42a603;path=/BorelDet/Game/strategies.lean;lines=10}{\link}. While a quasistrategy allows to have several possible moves, a prestrategy additionally allows to have no possible moves in a position. In other words, a prestrategy is just a function from positions to sets of moves. In particular, in contrast to quasistrategies, the existence of winning prestrategies is trivial: the empty prestrategy is vacuously winning.\par
The following remarks apply mutatis mutandis to all of three notions of strategy. Some textbooks and \cite{indboreldet} define a (quasi-)strategy not as a map sending a position to a successor position, called \textbf{functional strategies} in the following, but rather as the tree of positions compatible with the strategy, called \textbf{strategy trees} here. While there is a canonical surjection from functional strategies onto strategy trees, this map is not injective (namely, a functional strategy also specifies moves in positions that could only be reached by violating the strategy before). However, due to the surjection, which also preserves the property of a strategy to be winning, this difference is irrelevant as far as only the question which player wins a game is concerned. In particular, it is irrelevant in the study of determinacy, the primary application of Gale-Stewart games. While strategy trees have a more reasonable notion of equality, equality of strategies is rarely relevant in applications or proofs of determinacy.\par
While formalizing, functional strategies turned out to be more convenient to work with than strategy trees:
\begin{lstlisting}
  def PreStrategy := ∀ x : T, IsPosition x.val p → Set (ExtensionsAt x)
\end{lstlisting}
The main reason is that many specific functional strategies are easier to formally define than the associated strategy trees. Defining a strategy tree often requires recursion that can be avoided when defining a corresponding functional strategy. Moreover, one would need to prove ``for positions of the opposing player, all possible successors lie in the tree'' whenever defining a strategy tree directly.\par
In contrast, strategy trees are sometimes easier to use, for example in defining the notion of a winning strategy\href{https://archive.softwareheritage.org/swh:1:cnt:00952ab227ef5ce78a836a035669677c8bab797a;origin=https://github.com/sven-manthe/A-formalization-of-Borel-determinacy-in-Lean;visit=swh:1:snp:6860cd60adfdddb6a8bec689ddb852388d32bd5d;anchor=swh:1:rev:267694b2241ce1900123d6d9fa15bb4a0b42a603;path=/BorelDet/Game/games.lean;lines=63}{\link}. We thus defined the canonical surjection\href{https://archive.softwareheritage.org/swh:1:cnt:d38cdffd5509761b07b53a564a62b93a815caa53;origin=https://github.com/sven-manthe/A-formalization-of-Borel-determinacy-in-Lean;visit=swh:1:snp:6860cd60adfdddb6a8bec689ddb852388d32bd5d;anchor=swh:1:rev:267694b2241ce1900123d6d9fa15bb4a0b42a603;path=/BorelDet/Game/strategies.lean;lines=23}{\link} and used it to define such notions, but did not formulate the more explicit conditions on a tree to be a strategy tree. More precisely, we did not formalize the lemma stating ``a tree lies in the image of the canonical surjection mentioned above if and only if for positions of the opposing player, all possible successors lie in the tree.''\par
Nonetheless, functional strategies were disadvantageous in some situations. For instance, the requirement in the definition of a $k$-covering $(\pi,\phi)$ that $\phi$ be the map induced by $\pi$ on the first $k$ levels is redundant when working with strategy trees. In that setting, it follows from the lifting condition (and is thus not included in Martin's definition). However, this argument fails for functional strategies since we need not find a lift for positions incompatible with the strategy. We added the additional condition and maintained it through the formalization to avoid the case distinctions that would otherwise be necessary.
\subsection{Categories of trees}
Here we discuss the formalization of the inverse limit construction, which is needed to handle countable unions.\par
Variants of Martin's definition of a $k$-covering \cites[20.C]{kechrisdst}[before 6F.3]{moschovakisdst} require the restriction to the first $k$ levels not only to be bijective, but to be the identity, and Martin assumes this in the proof of \cite[Lemma 4]{indboreldet}. While this might be cumbersome already in a purely set theoretical formalization, it presents additional problems in a type theoretic setting. Thus, we chose to drop this requirement and formulate the necessary commutative diagrams involving these restrictions and their inverses instead.\par
Martin's construction of the inverse limit of the $T_i$ was restricted to the case where the $k$-th transition map $T_{k+1}\to T_k$ is $k$-fixing. In this case, the limit tree is levelwise in bijection with some $T_i$. We opted for a less ad-hoc approach to these limits. Namely, the category $\mscr C$, as defined earlier, has all limits\href{https://archive.softwareheritage.org/swh:1:cnt:6afb2f62f2bdc0bf82f55eaada5901a98ad00c0f;origin=https://github.com/sven-manthe/A-formalization-of-Borel-determinacy-in-Lean;visit=swh:1:snp:6860cd60adfdddb6a8bec689ddb852388d32bd5d;anchor=swh:1:rev:267694b2241ce1900123d6d9fa15bb4a0b42a603;path=/BorelDet/Tree/tree_lim.lean;lines=136}{\link}. From general facts on limits of cofiltered systems\href{https://github.com/leanprover-community/mathlib4/blob/06bd3096751d751e3267390c3eed35f3a49d8a8c/Mathlib/CategoryTheory/Limits/Constructions/EventuallyConstant.lean#L125}{\link}, we deduced that the $k$-th projection is $k$-fixing in the case of fixing transition maps\href{https://archive.softwareheritage.org/swh:1:cnt:6afb2f62f2bdc0bf82f55eaada5901a98ad00c0f;origin=https://github.com/sven-manthe/A-formalization-of-Borel-determinacy-in-Lean;visit=swh:1:snp:6860cd60adfdddb6a8bec689ddb852388d32bd5d;anchor=swh:1:rev:267694b2241ce1900123d6d9fa15bb4a0b42a603;path=/BorelDet/Tree/tree_lim.lean;lines=150}{\link}.\par
We formalized the category $\mscr C$\href{https://archive.softwareheritage.org/swh:1:cnt:5f7f3651d4b78259514e0d7631aab767f312626e;origin=https://github.com/sven-manthe/A-formalization-of-Borel-determinacy-in-Lean;visit=swh:1:snp:6860cd60adfdddb6a8bec689ddb852388d32bd5d;anchor=swh:1:rev:267694b2241ce1900123d6d9fa15bb4a0b42a603;path=/BorelDet/Tree/len_tree_hom.lean;lines=22}{\link} using mathlib's category theory library. In fact, $\mscr C$ is a topos. It is equivalent to the category of sheaves on the total order $(\N,\le)$ regarded as category, where a tree $T$ corresponds to the sheaf $F$ whose sections at $n\in\N$ are the nodes at the $n$-th level of $T$, and where $\lim F=[T]$. However, we did not formalize this result, but proved directly that $\mscr C$ has all limits. We then proved that the functor $E_n$ given by restricting to the $n$-th level preserves limits\href{https://archive.softwareheritage.org/swh:1:cnt:6afb2f62f2bdc0bf82f55eaada5901a98ad00c0f;origin=https://github.com/sven-manthe/A-formalization-of-Borel-determinacy-in-Lean;visit=swh:1:snp:6860cd60adfdddb6a8bec689ddb852388d32bd5d;anchor=swh:1:rev:267694b2241ce1900123d6d9fa15bb4a0b42a603;path=/BorelDet/Tree/tree_lim.lean;lines=101}{\link} by constructing a left adjoint\href{https://archive.softwareheritage.org/swh:1:cnt:6afb2f62f2bdc0bf82f55eaada5901a98ad00c0f;origin=https://github.com/sven-manthe/A-formalization-of-Borel-determinacy-in-Lean;visit=swh:1:snp:6860cd60adfdddb6a8bec689ddb852388d32bd5d;anchor=swh:1:rev:267694b2241ce1900123d6d9fa15bb4a0b42a603;path=/BorelDet/Tree/tree_lim.lean;lines=76}{\link} (retrospectively, it would likely have been easier to prove the preservation of limits by a direct argument, which would have been equivalent by a adjoint functor theorem). Applying the standard computation of cofiltered limits of systems of isomorphisms to the image of a final segment of the original diagram under $E_k$ then yields the desired result that the $k$-th projection is $k$-fixing.\par
Afterwards, we used the category theory library mostly as a notational tool to manage compositions of morphisms. For instance, we defined categories with morphisms given by maps of strategies\href{https://archive.softwareheritage.org/swh:1:cnt:e086164446706288737a98f3441a0230e33e93d2;origin=https://github.com/sven-manthe/A-formalization-of-Borel-determinacy-in-Lean;visit=swh:1:snp:6860cd60adfdddb6a8bec689ddb852388d32bd5d;anchor=swh:1:rev:267694b2241ce1900123d6d9fa15bb4a0b42a603;path=/BorelDet/Proof/covering.lean;lines=46}{\link} or coverings\href{https://archive.softwareheritage.org/swh:1:cnt:e086164446706288737a98f3441a0230e33e93d2;origin=https://github.com/sven-manthe/A-formalization-of-Borel-determinacy-in-Lean;visit=swh:1:snp:6860cd60adfdddb6a8bec689ddb852388d32bd5d;anchor=swh:1:rev:267694b2241ce1900123d6d9fa15bb4a0b42a603;path=/BorelDet/Proof/covering.lean;lines=102}{\link}, without employing further theorems of category theory. The remainder of the formalization of the inverse limit argument\href{https://archive.softwareheritage.org/swh:1:cnt:a7038a7402b9f3863056f4b460dc50f3e729fb7e;origin=https://github.com/sven-manthe/A-formalization-of-Borel-determinacy-in-Lean;visit=swh:1:snp:6860cd60adfdddb6a8bec689ddb852388d32bd5d;anchor=swh:1:rev:267694b2241ce1900123d6d9fa15bb4a0b42a603;path=/BorelDet/Proof/covering_lim.lean;lines=234}{\link} was tedious but straightforward, following \cite{indboreldet} closely. However, the identifications made along the $k$-fixing transition maps needed to be translated back into commutativity of certain diagrams.
\subsection{Unraveling closed sets}
This subsection discusses the final part of the formalization, the unravelability of closed games. In this part, the categorical machinery developed for the limit argument was not required.\par
The primary challenge was translating the informal definition of the unraveling game to facilitate the construction of the map of strategies and the proof of the lifting property. The main difficulty stemmed from the fact that, while the unraveling game $G'$ proceeds as the unraveled game $G$ in almost all moves, two moves involve additional data, namely strategies of players and finite sequences. A naïve approach is to formalize the type of moves in $G'$ by a sum type of the moves in $G$ and \lstinline{Option} types specifying the notions of strategies either player may use (say, encoded as functional strategies), but this proved inconvenient in practice. Note that using strategy trees instead of functional strategies as moves would suffice. Furthermore, this choice enables uniform treatment of all three possible types of strategies that are played. Thus, we represented moves in $G'$ by the cartesian product of the moves $A$ in $G$ and trees on $A$\href{https://archive.softwareheritage.org/swh:1:cnt:d2c3876734fd8a04d9b455d5e0e7c9bf0e5cd7de;origin=https://github.com/sven-manthe/A-formalization-of-Borel-determinacy-in-Lean;visit=swh:1:snp:6860cd60adfdddb6a8bec689ddb852388d32bd5d;anchor=swh:1:rev:267694b2241ce1900123d6d9fa15bb4a0b42a603;path=/BorelDet/Proof/covering_closed_game.lean;lines=16}{\link}, with the second component encoding which moves are allowed according to the implicitly played strategies:\\
\begin{minipage}{\linewidth}
\begin{lstlisting}
  def A' := A × Tree A
  ...
  def ValidExt (x : List A') (a : A') := [a.1] ∈ getTree x ∧
    if x.length = 2 * k then
      ∃ S : QuasiStrategy (subAt (getTree x) [a.1]) Player.one, a.2 = S.1.subtree
    else if h : x.length = 2 * k + 1 then
      LosingCondition (x ++ [a]) (by simpa) ∨ WinningCondition (x ++ [a]) (by simpa)
    else a.2 = subAt (getTree x) [a.1]
  def gameTree : Tree A' where
    val := {x | List.reverseRecOn x True (fun x a hx ↦ hx ∧ ValidExt x a)}
\end{lstlisting}
\end{minipage}
The game rules then enforce that the trees played are induced by some such strategy\href{https://archive.softwareheritage.org/swh:1:cnt:d2c3876734fd8a04d9b455d5e0e7c9bf0e5cd7de;origin=https://github.com/sven-manthe/A-formalization-of-Borel-determinacy-in-Lean;visit=swh:1:snp:6860cd60adfdddb6a8bec689ddb852388d32bd5d;anchor=swh:1:rev:267694b2241ce1900123d6d9fa15bb4a0b42a603;path=/BorelDet/Proof/covering_closed_game.lean;lines=82}{\link}.
\subsection{Automation}
Maintaining the assumptions ``$f$ is $k$-fixing'' and ``$x$ is a position of player $i$'' throughout the argument, although trivially omitted on paper, presented technical challenges.\par
The requirements ``$x$ is a position of player $i$,'' after unfolding some length assumptions, often reduced to formulas of Presburger arithmetic (asking whether $\abs x$, which often was a sum of lengths of subsequences, is even). We developed a tactic\href{https://archive.softwareheritage.org/swh:1:cnt:ec64f3a31aa1f85ce770a2a6df6c12b7981eccc8;origin=https://github.com/sven-manthe/A-formalization-of-Borel-determinacy-in-Lean;visit=swh:1:snp:6860cd60adfdddb6a8bec689ddb852388d32bd5d;anchor=swh:1:rev:267694b2241ce1900123d6d9fa15bb4a0b42a603;path=/BorelDet/Game/player.lean;lines=29}{\link} that does the necessary unfolding of list lengths and then invokes Lean's builtin partial Presburger solver \lstinline{omega}.\par
For the assumptions ``$f$ is $k$-fixing'', we employed a similar approach with an additional refinement. The typical use case involved expressions $f^{-1}x$. In this case, Lean should deduce that $f$ is $\abs x$-fixing from the context and use this information to interpret the expression. To a term $f$, we could usually associate a maximal $k\in\N$ syntactically such that $f$ is $k$-fixing (to be precise, $k$ is maximal such that there is an ``obvious'' proof that $f$ is $k$-fixing, but $f$ may be $(k+1)$-fixing for other reasons). Our solution leveraged Lean's typeclass inference mechanism, using this maximal $k$ as a ``out-parameter'' of a typeclass\href{https://archive.softwareheritage.org/swh:1:cnt:2ff192c2caec1e1bf562330fb286b7a0cc8a1a81;origin=https://github.com/sven-manthe/A-formalization-of-Borel-determinacy-in-Lean;visit=swh:1:snp:6860cd60adfdddb6a8bec689ddb852388d32bd5d;anchor=swh:1:rev:267694b2241ce1900123d6d9fa15bb4a0b42a603;path=/BorelDet/Tree/restrict_tree.lean;lines=88}{\link}:
\begin{lstlisting}
  class Fixing (k : outParam ℕ) (f : S ⟶ T) : Prop where prop : IsIso ((res k).map f)
\end{lstlisting}
Then Lean automatically infers a suitable $k$. To finish, we apply a procedure similar as before\href{https://archive.softwareheritage.org/swh:1:cnt:2ff192c2caec1e1bf562330fb286b7a0cc8a1a81;origin=https://github.com/sven-manthe/A-formalization-of-Borel-determinacy-in-Lean;visit=swh:1:snp:6860cd60adfdddb6a8bec689ddb852388d32bd5d;anchor=swh:1:rev:267694b2241ce1900123d6d9fa15bb4a0b42a603;path=/BorelDet/Tree/restrict_tree.lean;lines=120}{\link} to prove $\abs x\le k$. This tactic was used as a default argument to the inverse function.
\section{Dependent types and ``junk'' values}\label{sec:partial}
\subsection{General considerations}
When formalizing partial functions $f\colon X\to Y$, there are two primary design choices. For the first one, let $P$ be a predicate defining the domain $\dom f$ and represent $f$ as a function $\setb{x\in X}{P(x)}\to Y$. For the second one, express $f$ as a total function $X\to Y\amalg\set{\bot}$, where $X\setminus f^{-1}(\set\bot)=\dom f$. Several minor variants are possible. In the first case, one could use the curried version $(x\colon X)\to P(x)\to Y$ instead. In the second case, often $\bot$ is replaced by a carefully chosen element of $Y$, e.g. 0, avoiding the need to extend the codomain---in this case, one still defines $P$, which enables to recover the domain.\par
In informal mathematics, the first variant is generally preferred. A notable exception is the statement of Fubini's theorem on product measures, $\int f(x,y)d(x,y)=\int(\int f(x,y)dy)dx$. Here the inner integral $\int f(x,y)dy$ only exists for almost all $x$, and one usually declares it to be $0$ otherwise to define the iterated integral. Some abuse of notation, like writing $\lim_{n\to\infty}\frac{x^n}{n-1}$ although the quotient is not defined in all cases, can also be more easily justified using the second approach. Lean's mathlib largely adopts the second approach, with some exceptions, such as the indexing of lists and arrays. It also uses the first approach when choosing an element of $Y$ instead of adding a new one is not possible, for example in the type theoretic axiom of choice. Our project, however, prefers the first approach, for example in the definition of strategies, which are undefined in positions of the opponent. In the following, we compare the two approaches.\par
Let us first consider advantages of the first approach. If the necessary hypotheses $P(x)$ are included in the term as an argument, then syntactical rewriting operations, replacing $x$ by a propositionally equal $x'$, can automatically infer $P(x')$. With the second approach, after a series of rewriting operations, it may become necessary to perform corresponding rewriting operations separately on $P(x)$. When such partial functions are nested, this usually increases the number of required rewriting operations linearly. For example, if $(f_n\circ\dots\circ f_1)(x)$ is rewritten to $(f_n\circ\dots\circ f_1)(x')$, then one also needs to rewrite at all hypotheses stating that $(f_i\circ\dots\circ f_1)(x')$ lies in the intended domain of $f_n\circ\dots\circ f_{i+1}$. This is particularly problematic if these hypotheses are provided not as explicit terms but by automation, which may succeed on input $(f_i\circ\dots\circ f_1)(x)$, but fail on $(f_i\circ\dots\circ f_1)(x')$. Another, albeit minor, advantage of the first approach is that the contextual information provided by the hypotheses $P(x)$ in the term itself can make certain lemmas applicable, thus enhancing tactics like Lean's \lstinline{simp}.\par
Next we describe advantages of the second approach. While the first approach yields subterms with complex dependent types, the second approach avoids the use of dependent types. This is often beneficial since many standard lemmas, like congruence lemmas for functions, are more easily stated in a non-dependently typed context. It also simplifies rewriting along propositional equalities of subterms since replacing a term by an equal term otherwise may lead to ill-typed expressions (this issue appears if the first variant is used in the uncurried version). It also becomes possible to input partial functions into higher-order functions: for example, one may take limits of functions defined in a neighborhood or integrals of functions defined almost everywhere. Another minor advantage arises if $\bot$ is replaced by a carefully chosen element of $Y$: some identities may remain valid even without the hypothesis $P(x)$, whence some proofs can omit this hypothesis.\par
We finally justify the choice of the first approach in our formalization with an example. The definition
\begin{lstlisting}
  def PreStrategy := ∀ x : T, IsPosition x.val p → Set (ExtensionsAt x)
\end{lstlisting}
uses the dependence on \lstinline{IsPosition} instead of junk values. Thus, when rewriting $x$ to $y$ in an expression involving applications of strategies to complex terms, the property \lstinline{IsPosition y.val p} was maintained automatically. Otherwise, after such steps it would often be necessary to unfold $y$ to reprove that it is a position of $p$, that is, determine the parity of $\abs y$.
\subsection{Lean and dependent types}
Finally, we discuss some challenges encountered during the formalization due to our choice of the first approach, which is not well-supported by some of Lean's tactics due to the dominance of the second approach in most Lean projects, including mathlib. We expect that these problems could be fixed by improving support for dependent types in Lean.\par
For rewriting, Lean primarily provides two tactics, \lstinline{rewrite[lemma]} and \lstinline{simp only[lemma]}. A main difference is that \lstinline{rewrite} only uses a single lemma, while \lstinline{simp only} is obtained by specializing a tactic \lstinline{simp} that employs ``discrimination trees'' to attempt rewriting with many different lemmas. Moreover, \lstinline{rewrite} traverses terms top down, while \lstinline{simp} traverses terms bottom up. To us, more subtle differences involving the implementation are more relevant.\par
The \lstinline{rewrite} tactic is more primitive. It directly applies path recursion, that is, the type theoretic recursion lemmas for the inductive type of equality. A ``motive'' for this recursion lemma (roughly, a function $f$ such that for $x=y$ the term $fx$ is rewritten to $fy$) is created by traversing the term. Thus, for example, it is impossible to rewrite subterms containing bound variables. The problem most relevant to us is that these motives need not be type correct if dependent types are involved, and \lstinline{rewrite} fails in such cases. Thus \lstinline{rewrite} is mainly usable with the second approach to partial functions, although it is more powerful in edge cases. After our formalization, a variant \lstinline{rewrite!} for dependent types was added to Lean.\par
The \lstinline{simp} tactic does not use path induction directly, but instead generates congruence lemmas for the substitutions. In particular, it can handle dependent types by adjusting propositional hypotheses $P(x)$ whose types are affected by these replacements along $x=x'$ to $P(x')$. (Changing the proof term in this and similar cases is irrelevant since propositional extensionality is built-in to Lean as a definitional equality.) This enables the advantages of the first approach.\par
However, also \lstinline{simp} sometimes struggled with dependent types. Consider, for example, a function $f\colon(x\colon X)\to Y(x)$ for a type family $Y$, and a second function $g\colon(x\colon X)\to Y(x)\to Z$. They can be composed to a non-dependent function $gf\colon X\to Z$ by $gf(x)=g(x)(f(x))$. While rewriting $f(x)$ to $f(x')$ is problematic as it changes the type (and is thus not performed by \lstinline{simp}), rewriting $gf(x)$ to $gf(x')$ is not. However, \lstinline{simp} does not recognize this unless a manual congruence lemma is added for the specific pair of functions $f$ and $g$. In the project, we added such congruence lemmas manually for all necessary pairs of functions (e.g., \lstinline{Subtype.val} and application of strategies \href{https://archive.softwareheritage.org/swh:1:cnt:d38cdffd5509761b07b53a564a62b93a815caa53;origin=https://github.com/sven-manthe/A-formalization-of-Borel-determinacy-in-Lean;visit=swh:1:snp:6860cd60adfdddb6a8bec689ddb852388d32bd5d;anchor=swh:1:rev:267694b2241ce1900123d6d9fa15bb4a0b42a603;path=/BorelDet/Game/strategies.lean;lines=121}{\link}), but this process could be automated. Similarly, the discrimination trees used in the implementation of \lstinline{simp} to avoid considering unnecessary subterms check for equality only up to a certain level of reducibility. This mechanism failed in cases where unfolding reducible definitions was required from the lemma being applied to the rewritten statement, instead of vice versa. We addressed this issue by annotating some arguments with \lstinline{no_index}, which excludes them from consideration in the discrimination tree matching.\par
The final issue concerns performance issues arising from unnecessary nesting and traversal of proof terms. The prevalent approach in Lean and mathlib is to disregard the size of proof terms since they are typically hidden behind lemma names and never need to be unfolded due to propositional extensionality. For instance, when a definition is added to the environment, all its propositional subterms are abstracted as new lemmas and replaced by references to these lemmas. This process avoids traversing the proof term when unfolding the definition afterwards.\par
Unfortunately, this is applied only to the value of a declaration, but not to its type (for example, it is not applied to the statements of lemmas). Moreover, it is performed only after the entire definition has been processed. Thus, if later parts of the expression depend on earlier parts, then tactics evaluated to construct the later parts may traverse proof terms for the earlier parts.\par
Neither restriction is problematic if the second approach to partial functions is used exclusively (in fact, in this case even the implemented post-processing step could be omitted). Hence, this usually causes no performance issues in mathlib (the few cases where propositional arguments are used either do not involve excessively large proof terms, like most list indexing, or are handled carefully by the end user, like the axiom of choice). Constant factor increases in runtime, caused for example as the partial Presburger solver \lstinline{omega} produces large proof terms, can already be noticeable. However, when using the first approach to partial functions and nesting them, then proof terms may be substituted into proof terms repeatedly. This leads to an increase of the running time exponential in the number of dependent hypotheses. For example, the running time of the following code\footnote{Kim Morrison minimized this at \url{https://github.com/leanprover/lean4/issues/5108}} is exponential in $n$:\\
\begin{minipage}{\linewidth}
\begin{lstlisting}
  #time
  example (A : Type) (p : Bool) : ([] : List A).length % 1 = 0 := by
    iterate n let h1 : ([] : List A).length % 1 = 0 := by cases p <;> sorry
    sorry
\end{lstlisting}
\end{minipage}
The obvious solution to this performance problem is to eagerly abstract proof terms as lemmas. Performing this abstraction manually is not feasible. While it is already cumbersome to replace proof terms like the ones for ``$f$ is $\abs x$-fixing'' implicitly constructed in $f^{-1}x$ by hand, the more serious problem is that the effort to do so scales quadratically in the size of the proof code. Namely, when abstracting a proof as a lemma, one must not only state the often short proof term (for example, \lstinline{by simp; omega}), but also the type of the expression, which otherwise could be deduced from the context. The size of this type alone grows linearly with the code, causing the quadratic scaling. To address this, we wrote a metaprogram \lstinline{abstract} that performs the abstraction eagerly, deducing the type from the context:\\
\begin{minipage}{\linewidth}
\begin{lstlisting}
  elab "abstract" tacs:ppDedent(tacticSeq) : tactic => do
    let target ← getMainTarget
    let goal ← getMainGoal
    let newGoal ← mkFreshExprMVar target
    setGoals [newGoal.mvarId!]
    Elab.Tactic.evalTactic tacs
    setGoals [goal]
    goal.assign (← mkAuxTheorem ((← getDeclName?).get! ++ `abstract ++ (← mkFreshId)) target newGoal)
\end{lstlisting}
\end{minipage}
Replacing problematic occurrences of \lstinline{by tactics} by \lstinline{by abstract tactics} resolved the performance issues. Independently, a metaprogram \lstinline{as_aux_lemma} with the same functionality was added to the core language\footnote{\url{https://github.com/leanprover/lean4/pull/6823}} for the implementation of tree maps. We finally used \lstinline{as_aux_lemma} instead of \lstinline{abstract} in the repository.
\section{Related work}
\cite{galestewartisabelle} formalized the determinacy of closed games, one of our intermediate results, in Isabelle. They used coinductive lists to handle finite and infinite plays simultaneously, while we restricted attention to infinite plays and use finite sequences only to describe partial plays.\par
Mathlib includes a formalization of finite games, \lstinline{SetTheory.Game}, following \cite{conwaygames}. This formalism only handles finite games and includes determinacy, which always holds in this context, implicitly.
\section{Conclusions}
The formalization followed \cite{indboreldet} closely, only requiring expansions of the argument for parts that are inherently informal, like some descriptions of strategies or the identification of sets standing in canonical bijection. It also demonstrates that using consistency-wise strong fragments of ZFC does not pose an obstruction to formalizing in itself.\par
An interesting direction for future work would be to formalize determinacy for larger classes of sets under large cardinal hypotheses, and possibly also the converse statements constructing inner models with large cardinals assuming determinacy.\par
While basic results of descriptive set theory have already been formalized in mathlib, several consequences of Borel determinacy that also could be proven independently are still missing. Thus, Borel determinacy could also serve as a tool for building a more comprehensive library of formalized descriptive set theory.\par
In the author's opinion, mathlib's choice to prefer encoding partial functions using ``junk'' values instead of mirroring informal mathematical definitions is not necessary and may even be a disadvantage. It would be interesting to see other formalization projects encode partial functions by restricting the domain, particularly after support for this in Lean improved.
\printbibliography
\end{document}